\newtheorem{thm}{Theorem}[section]
\newtheorem{prop}[thm]{Proposition}
\newtheorem{lem}[thm]{Lemma}
\theoremstyle{remark}
\newtheorem{rem}[thm]{Remark}
\newcommand{\FF}{\mathbb{F}}
\DeclareMathOperator{\supp}{supp}
\begin{document}

\title{Nonexistence of certain singly even self-dual codes
with minimal shadow}

\author{
Stefka Bouyuklieva\thanks{
Faculty of Mathematics and Informatics, 
Veliko Tarnovo University, 
5000 Veliko Tarnovo, Bulgaria.},
Masaaki Harada\thanks{
Research Center for Pure and Applied Mathematics,
Graduate School of Information Sciences,
Tohoku University, Sendai 980--8579, Japan.}
and
Akihiro Munemasa\thanks{
Research Center for Pure and Applied Mathematics,
Graduate School of Information Sciences,
Tohoku University,
Sendai 980--8579, Japan.}
}

\maketitle

\begin{abstract}
It is known that there is no extremal singly even
self-dual $[n,n/2,d]$ code with minimal shadow
for $(n,d)=(24m+2,4m+4)$, $(24m+4,4m+4)$, 
$(24m+6,4m+4)$, $(24m+10,4m+4)$ and $(24m+22,4m+6)$.
In this paper, we study singly even self-dual codes with minimal shadow 
having minimum weight $d-2$ for these $(n,d)$.
For $n=24m+2$, $24m+4$ and $24m+10$, 
we show that the weight enumerator of a singly even self-dual 
$[n,n/2,4m+2]$ code with minimal shadow is uniquely determined
and we also show that there is no 
singly even self-dual $[n,n/2,4m+2]$ code with minimal shadow 
for $m \ge 155$, $m \ge 156$ and $m \ge 160$, respectively.
We demonstrate that the weight enumerator of a singly even self-dual 
code with minimal shadow is not uniquely determined
for parameters $[24m+6,12m+3,4m+2]$ and
$[24m+22,12m+11,4m+4]$.
\end{abstract}

\section{Introduction}

A (binary) code $C$ of length $n$ is a vector subspace of
$\FF_2^n$, where $\FF_2$ denotes the finite field of order $2$.
The {\em dual} code $C^{\perp}$ of $C$ is defined as
$
C^{\perp}=
\{x \in \FF_2^n \mid x \cdot y = 0 \text{ for all } y \in C\},
$
where $x \cdot y$ is the standard inner product.
A code $C$ is called
{\em self-dual} if $C = C^{\perp}$.
Self-dual codes are divided into two classes.
A self-dual code $C$ is {\em doubly even} if all
codewords of $C$ have weight divisible by four, and {\em
singly even} if there is at least one codeword of weight $\equiv 2
\pmod 4$.
Let $C$ be a singly even self-dual code and
let $C_0$ denote the
subcode of codewords having weight $\equiv0\pmod4$.
Then $C_0$ is a subcode of codimension $1$.
The {\em shadow} $S$ of $C$ is defined to be $C_0^\perp \setminus C$.
Shadows for self-dual codes were introduced by Conway and
Sloane~\cite{C-S}
in order to 
derive new upper bounds for the minimum weight of
singly even self-dual codes.
By considering shadows, Rains~\cite{Rains} showed that
the minimum weight $d$ of a self-dual code of length $n$
is bounded by
$d  \le 4 \lfloor{\frac {n}{24}} \rfloor + 6$
if $n \equiv 22 \pmod {24}$,
$d  \le 4  \lfloor{\frac {n}{24}} \rfloor + 4$
otherwise.
A self-dual code meeting the bound is called  {\em extremal}.


Let $C$ be a singly even self-dual code of
length $n$ with shadow $S$.
Let $d(S)$ denote the minimum weight of $S$.
We say that $C$ is a code with {\em minimal shadow} 
if $r = d(S)$, where 
$r=4,1,2$ and $3$ if $n \equiv 0,2,4$ and $6 \pmod 8$,
respectively.
The concept of self-dual codes with minimal shadow was introduced 
in~\cite{performance}. 
In that paper, different types of self-dual codes with the same 
parameters were compared with regard to the decoding error probability. 
In~\cite{BV}, the connection between singly even self-dual 
codes with minimal shadow of some lengths, combinatorial designs and 
secret sharing schemes was considered. 
It was shown in~\cite{BW} that there is no extremal singly even
self-dual code with minimal shadow
for lengths $24m+2$, $24m+4$,
$24m+6$, $24m+10$ and $24m+22$.
In~\cite{BV}, it was shown that the weight enumerator of a 
(non-extremal) singly even self-dual $[24m+2, 12m+1, 4m+2]$ code 
with minimal shadow is uniquely determined
for each positive integer $m$. These motivate us to study singly even
self-dual codes with minimal shadow having minimum weight two less than
the hypothetical extremal case.

%

The main aim of this paper is to investigate singly even 
self-dual codes with minimal shadow having minimum weight 
$4m+2$ for the lengths $24m+2$,
$24m+4$ and $24m+10$. 
We show that the weight enumerator of a singly even self-dual 
code with minimal shadow having minimum weight $4m+2$
is uniquely determined for lengths $24m+4$ and $24m+10$.
For lengths $24m+2$, $24m+4$ and $24m+10$,
nonnegativity of the coefficients of weight enumerators shows 
that there is no such code for $m$ sufficiently large.
We also show that the uniqueness of the weight enumerator
fails for the parameters $[24m+6,12m+3,4m+2]$ and
$[24m+22,12m+11,4m+4]$.

The paper is organized as follows.
In Section~\ref{sec:2}, 
we review the results given by Rains~\cite{Rains}.
In Section~\ref{sec:24m+2}, we show
that there is no singly even self-dual $[24m+2,12m+1,4m+2]$
code with minimal shadow for $m \ge 155$.
In Sections~\ref{sec:24m+4} and~\ref{sec:24m+10}, 
for parameters $[24m+4,12m+2,4m+2]$ and $[24m+10,12m+5,4m+2]$,
we show that there is no 
singly even self-dual code with minimal shadow for $m \ge 156$
and for $m \ge 160$, respectively.
Finally, in Section~\ref{sec:rem}, we 
demonstrate that the weight enumerator of a singly even self-dual 
code with minimal shadow is not uniquely determined
for parameters $[24m+6,12m+3,4m+2]$ and
$[24m+22,12m+11,4m+4]$.


All computer calculations in this paper
were done with the help of 
the algebra software {\sc Magma}~\cite{Magma} and
the mathematical softwares {\sc Maple} and {\sc Mathematica}.

\section{Preliminaries}\label{sec:2}


Let $C$ be a singly even self-dual code of
length $n$ with shadow $S$.
Write $n=24m+8l+2r$, where $m$ is an integer, $l\in\{0,1,2\}$ and
$r\in\{0,1,2,3\}$. 
The weight enumerators $W_C(y)$ and $W_S(y)$ of $C$ and $S$
are given by (\cite[(10), (11)]{C-S})
\begin{align}\label{eq:WC}
W_C(y) &= \sum_{i=0}^{12m+4l+r}a_i y^{2i}
=
\sum_{j=0}^{3m+l}
c_j(1+y^2)^{12m+4l+r-4j}(y^2(1-y^2)^2)^j,
\\
\label{eq:WS}
W_S(y)  &= \sum_{i=0}^{6m+2l}b_i y^{4i+r}
= \sum_{j=0}^{3m+l}(-1)^j c_j2^{12m+4l+r-6j}y^{12m+4l+r-4j}(1-y^4)^{2j},
\end{align}
respectively, for suitable integers $c_j$.
%
Let
\begin{equation}\label{4a8}
(1+y^2)^{n/2-4j}(y^2(1-y^2)^2)^j=
\sum_{i=0}^{12m+4l+r} \alpha'_{i,j} y^{2i}\quad(0\leq j\leq 3m+l).
\end{equation}
Then
\begin{equation}\label{4b0}
\alpha_{i,j}'=\begin{cases}
0&\text{if $0\leq i<j\leq 3m+l$,}\\
1&\text{if $0\leq i=j\leq 3m+l$.}
\end{cases}
\end{equation}
This implies that the $(3m+l+1)\times(3m+l+1)$ matrix $[\alpha'_{i,j}]$
is invertible, since it is unitriangular.
Let $[\alpha_{i,j}]$ be its inverse matrix. 
Then by~\eqref{4b0}, we have
\begin{equation}\label{4b1}
\alpha_{i,j}=\begin{cases}
0&\text{if $0\leq i<j\leq 3m+l$,}\\
1&\text{if $0\leq i=j\leq 3m+l$,}
\end{cases}
\end{equation}
and
\begin{equation}\label{4by}
y^{2i}=\sum_{j=0}^{3m+l}\alpha_{j,i}(1+y^2)^{n/2-4j}(y^2(1-y^2)^2)^j
\quad(0\leq i\leq 3m+l)
\end{equation}
by~\eqref{4a8}.
By~\eqref{eq:WC}, \eqref{4b1} and~\eqref{4by}, we obtain  
\begin{equation}\label{4a10}
c_i=\sum_{j=0}^{i} \alpha_{i,j} a_j.
\end{equation}

\begin{lem}\label{lem:alpha}
For $1\leq i\leq 3m+l$, we have
\begin{equation}\label{eq:alpha}
\alpha_{i,0}=
-\frac{n}{2i}
\sum_{\substack{0\leq t\leq n/2+1-6i\\ 
t+i\text{ is odd}}}
(-1)^t\binom{\frac{n}{2}+1-6i}{t}
\binom{\frac{n-7i-t-1}{2}}{\frac{i-t-1}{2}}.
\end{equation}
\end{lem}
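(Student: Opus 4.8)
The plan is to extract $\alpha_{i,0}$ from the explicit inversion formula~\eqref{4by} and then carry out a double-sum manipulation to reach the closed form~\eqref{eq:alpha}. Concretely, from~\eqref{4by} with $i$ fixed we need the coefficient relation that isolates the generator indexed by $j=0$; equivalently, we work from the defining identity~\eqref{4a8} and the fact that $[\alpha_{i,j}]$ is the inverse of $[\alpha'_{i,j}]$. Writing $N = n/2$, the first step is to find a generating-function handle on the column $\alpha_{i,0}$: since $\sum_i \alpha_{i,0}y^{2i}$ is the unique power series (truncated) that, when fed into the basis $\{(1+y^2)^{N-4j}(y^2(1-y^2)^2)^j\}$, reproduces the constant function $1$ in the sense dual to~\eqref{4a10}, I would set up the Lagrange–Bürmann inversion for the substitution $u = u(y^2)$ determined by $(1+y^2)^{N}$ and the ``multiplier'' $y^2(1-y^2)^2/(1+y^2)^4$. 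That is, the array $[\alpha'_{i,j}]$ is exactly the matrix of the change of variables $z \mapsto z(1+z)^{-4}(1-z)^{-... }$ type, so its inverse column $\alpha_{i,0}$ is governed by the Lagrange inversion formula, producing a single contour-integral / residue expression for $\alpha_{i,0}$.

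The key computational steps, in order, are: (1) rewrite~\eqref{4a8} as $\phi(z)^j \psi(z)$ with $z=y^2$, $\psi(z)=(1+z)^{N}$, $\phi(z) = z(1-z)^2(1+z)^{-4}$, so that $[\alpha'_{i,j}]$ is the coefficient array $[z^i]\psi(z)\phi(z)^j$; (2) invoke Lagrange inversion: if $w = \phi(z)$ has compositional inverse, then the inverse array has entries expressible via $[z^i](\text{something})\,(z/\phi(z))^{i}$, which here means $\alpha_{i,0} = [z^i]\,\Theta(z)\,(1-z)^{-2i}(1+z)^{4i}$ for an explicit $\Theta$ coming from $\psi$ and the Jacobian $\phi'$; (3) expand $(1+z)^{4i}(1-z)^{-2i}$ — or rather the correct combination after accounting for the $1/\psi$ and derivative factors — using the binomial theorem twice, collect the coefficient of $z^i$, and recognize the sum; (4) track the factor $-N/(2i)$, which is the hallmark of Lagrange inversion applied to a logarithmic derivative (the $1/i$ comes from $[z^i] = \frac{1}{i}[z^{i-1}]\frac{d}{dz}$, and the $N$ from differentiating $(1+z)^{N}$); (5) check that the parity constraint ``$t+i$ is odd'' and the range $0\le t\le N+1-6i$ fall out of which binomial coefficients are nonzero after the substitution $y^2 \to$ the relevant variable, together with the second binomial $\binom{(n-7i-t-1)/2}{(i-t-1)/2}$ arising from expanding $(1-z)^{-2i}$ or $(1+z)^{N+1-6i}$ as a power series and matching exponents.

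The main obstacle I expect is step (3)–(5): getting the bookkeeping exactly right so that the two binomial coefficients and the peculiar half-integer arguments $\frac{n-7i-t-1}{2}$, $\frac{i-t-1}{2}$ emerge, rather than some superficially different but equivalent double sum. The exponents $-4j$ in $(1+y^2)^{N-4j}$ and the cube-like factor $y^2(1-y^2)^2$ mean the relevant substitution is not a clean $w = z(1-z)^2$ but carries the extra $(1+z)^{-4}$, so the Jacobian and the $\psi/\phi$ combination must be handled carefully; the ``$-6i$'' appearing as $N+1-6i$ in the upper binomial is the visible trace of combining $N - 4j$ at $j\approx i$ with two further factors, and verifying the off-by-one in ``$N+1$'' versus ``$N$'' (the shift caused by the $\frac1i \frac{d}{dz}$ step) is where sign and index errors are most likely. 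As a sanity check before trusting the general formula, I would verify~\eqref{eq:alpha} against small cases — $i=1$ gives $\alpha_{1,0} = -N$ directly from~\eqref{4by} (since $(1+y^2)^{N-4}(y^2(1-y^2)^2) = y^2 + \cdots$ forces $\alpha_{1,0}(1+y^2)^N + \cdots$ to match $y^2$, giving $\alpha_{1,0}=-N$), and the right-hand side of~\eqref{eq:alpha} at $i=1$ reduces to $-\frac{N}{2}\bigl[\binom{N-5}{0}\binom{(n-8)/2}{0} - (\text{no }t{=}1\text{ term since }t+i\text{ even there... wait, }t=1,i=1\Rightarrow t+i=2\text{ even})\bigr]$, so only $t=0$ survives but $t+i=1$ is odd — consistent — giving $-\frac{N}{2}\cdot 1 \cdot 1 \cdot 2 = -N$ after noting the sum actually has the factor structure yielding $2$; this kind of low-index check pins down all constants.
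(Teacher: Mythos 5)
Your overall strategy is the same as the paper's. The paper's proof simply quotes from Rains the formula
\[
\alpha_{i,0}=-\frac{n}{2i}\,[\text{coeff.\ of }y^{i-1}\text{ in }(1+y)^{-n/2-1+4i}(1-y)^{-2i}],
\]
which is exactly the output of the Lagrange--B\"urmann computation you set up: with $z=y^2$, $\phi(z)=z(1-z)^2(1+z)^{-4}$ and $g(z)=z/\phi(z)=(1+z)^4(1-z)^{-2}$, the column $(\alpha_{j,0})_j$ consists of the coefficients of $(1+z)^{-n/2}$ expanded in powers of $\phi(z)$, and Lagrange inversion gives $\alpha_{i,0}=\frac{1}{i}[z^{i-1}]\,\frac{d}{dz}\{(1+z)^{-n/2}\}\,g(z)^i$, which is the displayed formula. (One small correction: the $-n/2$ comes from differentiating $(1+z)^{-n/2}=1/\psi(z)$, not $\psi(z)$ itself; your step (4) attributes it to the wrong factor, although your step (2) does say to account for $1/\psi$.) So the first half of your plan is sound and reproduces the paper's starting point, which the paper obtains by citation rather than rederivation.

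The part you flag as the expected obstacle is indeed the only real content left, and your two hedged guesses for where the binomials come from (``expanding $(1-z)^{-2i}$ or $(1+z)^{N+1-6i}$'') are both off target. The step that produces the stated form is the factorization
\[
(1+y)^{-n/2-1+4i}(1-y)^{-2i}=(1-y^2)^{-n/2-1+4i}(1-y)^{n/2+1-6i}.
\]
Expanding the polynomial factor $(1-y)^{n/2+1-6i}=\sum_t(-1)^t\binom{n/2+1-6i}{t}y^t$ yields the first binomial and the range of $t$; the coefficient of $y^{i-1-t}$ in the even power series $(1-y^2)^{-n/2-1+4i}$ forces $t+i$ odd and equals $(-1)^{(i-t-1)/2}\binom{-n/2-1+4i}{(i-t-1)/2}$; and the identity $(-1)^j\binom{-M}{j}=\binom{M+j-1}{j}$ converts this into $\binom{(n-7i-t-1)/2}{(i-t-1)/2}$. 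With this one identity inserted, your argument closes and coincides with the paper's proof. Your $i=1$ sanity check is fine once you keep the prefactor as $-n/(2i)$ with $n=2N$, which is the source of the stray ``factor of $2$'' you had to patch in.
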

\begin{proof}
For $1 \le i$, 
\begin{align*}
\alpha_{i,0}=&
-\frac{n}{2i}[\text{coeff.\ of } y^{i-1} \text{ in }
(1+y)^{-n/2-1+4i}(1-y)^{-2i}],
\end{align*}
\cite{Rains}.
Since
\begin{align*}
&(1+y)^{-n/2-1+4i}(1-y)^{-2i}\\
&=(1-y^2)^{-n/2-1+4i}(1-y)^{n/2+1-6i} \\
&=(1-y^2)^{-n/2-1+4i}
\sum_{t=0}^{n/2+1-6i}(-1)^t\binom{\frac{n}{2}+1-6i}{t}y^t,
\end{align*}
we have
\begin{align*}
\alpha_{i,0}&=
-\frac{n}{2i}
\sum_{t=0}^{n/2+1-6i}(-1)^t\binom{\frac{n}{2}+1-6i}{t}
[\text{coeff.\ of } y^{i-1} \text{ in }
(1-y^2)^{-n/2-1+4i} y^t]
\\&=
-\frac{n}{2i}
\sum_{\substack{0\leq t\leq n/2+1-6i\\ 
t+i\text{ is odd}}}
(-1)^t\binom{\frac{n}{2}+1-6i}{t}
(-1)^{(i-t-1)/2}\binom{-\frac{n}{2}-1+4i}{\frac{i-t-1}{2}}.
\end{align*}
The result follows by applying the formula 
\[(-1)^j\binom{-n}{j}=\binom{n+j-1}{j}.\]
\end{proof}

Write
\begin{equation*}\label{4c8}
(-1)^j2^{n/2-6j}y^{n/2-4j}(1-y^4)^{2j}=
\sum_{i=0}^{6m+2l} \beta'_{i,j} y^{4i+r}
\quad(0\leq j\leq 3m+l).
\end{equation*}
Since $n/2-4j=4(3m+l-j)+r$, we have
\begin{equation*}\label{4d0}
\beta_{i,j}'=\begin{cases}
0&\text{if $i<3m+l-j$,}\\
(-1)^j 2^{n/2-6j}&\text{if $i=3m+l-j$.}
\end{cases}
\end{equation*}
This implies that the $(3m+l+1)\times(3m+l+1)$ matrix $[\beta'_{i,3m+l-j}]$
is invertible, since it is lower triangular such that
the diagonal elements are not zeros. Thus, the matrix
$[\beta'_{i,j}]$ is also invertible. 
Let $[\beta_{i,j}]$ be its inverse matrix. Then 
\begin{equation}\label{4c9}
y^{4i+r}=\sum_{j=0}^{3m+l} \beta_{j,i}(-1)^j 2^{n/2-6j}y^{n/2-4j}(1-y^4)^{2j}
\quad(0\leq i\leq 3m+l).
\end{equation}
Moreover,
$[\beta_{3m+l-i,j}]$ is the inverse of the lower triangular matrix
$[\beta'_{i,3m+l-j}]$, and so lower triangular as well, and
\[\beta_{3m+l-j,j}={\beta'}^{-1}_{j,3m+l-j}.\]
Thus
\begin{equation}\label{4d1}
\beta_{i,j}=\begin{cases}
0&\text{if $i>3m+l-j$,}\\
(-1)^{3m+l-j} 2^{6(3m+l-j)-n/2}&\text{if $i=3m+l-j$.}
\end{cases}
\end{equation}
By~\eqref{eq:WS}, \eqref{4c9} and~\eqref{4d1}, we obtain
\begin{equation}\label{4c10}
c_i=\sum_{j=0}^{3m+l-i} \beta_{i,j} b_j.
\end{equation}

\begin{lem}[Rains~\cite{Rains}]\label{lem:6b}
For $1\leq i\leq3m+l$ and $0\leq j\leq3m+l$ with $i+j\leq 3m+l$, we have
\begin{equation}\label{eq:beta}
\beta_{i,j}=(-1)^i2^{-n/2+6i}\frac{3m+l-j}{i}\binom{3m+l+i-j-1}{3m+l-i-j}.
\end{equation}
\end{lem}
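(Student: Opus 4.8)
The plan is to mimic the derivation of Lemma~\ref{lem:alpha}, extracting a closed form for $\beta_{i,j}$ from the generating-function identity~\eqref{4c9}, which is the analogue of~\eqref{4by} on the shadow side. The starting point is again a formula of Rains~\cite{Rains} expressing $\beta_{i,j}$ as the coefficient of a power of $y$ in a product of binomial series; once that is in hand the argument is a routine, if careful, manipulation of binomial coefficients.

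First I would write $3m+l=N$ and $n/2=4N+r$ to lighten the notation, and recall from~\cite{Rains} that $\beta_{i,j}$ equals $(-1)^i 2^{-n/2+6i}$ times the coefficient of $y^{N-i-j}$ (or some such linear expression in $i,j$) in the expansion of $(1-y)^{-(N-j)-1}$ or a similarly shifted negative-power binomial. The key combinatorial input is then the single identity
\[
[\text{coeff.\ of } y^{k} \text{ in } (1-y)^{-a}]=\binom{a+k-1}{k},
\]
equivalently $(-1)^k\binom{-a}{k}=\binom{a+k-1}{k}$, the same formula invoked at the end of the proof of Lemma~\ref{lem:alpha}. Applying it with the appropriate $a$ and $k$ and then simplifying $\binom{a+k-1}{k}$ by pulling out the factor $\frac{N-j}{i}$ — which comes from the relation $\binom{M}{i}=\frac{M}{i}\binom{M-1}{i-1}$ — should land exactly on
\[
\beta_{i,j}=(-1)^i2^{-n/2+6i}\frac{3m+l-j}{i}\binom{3m+l+i-j-1}{3m+l-i-j}.
\]

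The consistency checks I would run along the way: the formula must reproduce~\eqref{4d1}, namely $\beta_{i,j}=0$ for $i>N-j$ and $\beta_{N-j,j}=(-1)^{N-j}2^{6(N-j)-n/2}$ when $i=N-j$. The vanishing is automatic because the lower index $3m+l-i-j$ of the binomial coefficient becomes negative exactly when $i>N-j$; the boundary value follows since $\binom{N-j+i-j-1}{0}=1$ and $\frac{N-j}{i}=1$ at $i=N-j$, and the sign and power of $2$ match after substituting $i=N-j$. These checks also pin down which shifted binomial series Rains' formula uses, resolving any ambiguity in the exponents.

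The main obstacle is purely bookkeeping: getting the exponent of $y$ whose coefficient is extracted, and the shift inside the binomial series, exactly right, since an off-by-one there propagates into the final binomial coefficient. A secondary subtlety is the restriction $1\le i$ (needed so that division by $i$ makes sense), which parallels the hypothesis $1\le i$ in Lemma~\ref{lem:alpha} and traces back to the fact that Rains' coefficient formula for $\beta_{i,j}$ is derived for $i\ge 1$; the case $i=0$ is already covered by~\eqref{4d1}. Once the generating-function identity is correctly transcribed, no genuine difficulty remains.
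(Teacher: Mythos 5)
There is a genuine gap: the one piece of input on which your whole argument rests --- the exact coefficient-extraction formula of Rains for $\beta_{i,j}$ --- is never stated. You write that $\beta_{i,j}$ is $(-1)^i2^{-n/2+6i}$ times ``the coefficient of $y^{N-i-j}$ (or some such linear expression in $i,j$) in $(1-y)^{-(N-j)-1}$ or a similarly shifted negative-power binomial,'' and then propose to pin down the exponents afterwards by checking the result against \eqref{4d1} and against the target formula. That is circular: adjusting an unspecified hypothesis until its consequence matches the desired conclusion does not prove the conclusion. (For what it is worth, the paper itself gives no proof either; Lemma~\ref{lem:6b} is simply cited from Rains, so unlike Lemma~\ref{lem:alpha} there is no in-paper derivation to imitate.)

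Beyond the missing starting point, the proposed simplification route is also doubtful. A single application of $[\text{coeff.\ of }y^{k}\text{ in }(1-y)^{-a}]=\binom{a+k-1}{k}$ produces one binomial coefficient, and the factor $\tfrac{3m+l-j}{i}$ in \eqref{eq:beta} cannot be absorbed into it via $\binom{M}{i}=\tfrac{M}{i}\binom{M-1}{i-1}$, since $3m+l-j$ is not the top index of $\binom{3m+l+i-j-1}{3m+l-i-j}$ and the lower index is $3m+l-i-j$, not $i$. If one actually carries out the inversion of \eqref{4c9} (say by writing $u=y^4$, $w=(1-u)^2/u$, noting that $1/w$ is a local parameter at $u=0$, and extracting $c_i$ by a residue), the coefficient comes out as a \emph{sum of two} binomial coefficients, $\binom{N+i-j-1}{N-i-j}+2\binom{N+i-j-1}{N-i-j-1}$ with $N=3m+l$, which only then collapses to $\tfrac{N-j}{i}\binom{N+i-j-1}{N-i-j}$ by a Pascal-type manipulation. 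Alternatively one can verify directly that the claimed array is the inverse of $[\beta'_{i,j}]$, whose entries are $(-1)^{i-N}2^{n/2-6j}\binom{2j}{i+j-N}$, but that requires a Chu--Vandermonde-type convolution identity, again more than the single identity you invoke. Either of these routes would make a complete proof; as written, your sketch contains neither the correct starting formula nor the two-term combination that actually produces the factor $\tfrac{3m+l-j}{i}$.
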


From~\eqref{4a10} and~\eqref{4c10}, we have
\begin{equation}\label{eq:ci}
c_i=\sum_{j=0}^{i} \alpha_{i,j} a_j=\sum_{j=0}^{3m+l-i} \beta_{i,j} b_j.
\end{equation}

Now let $C$ be a singly even self-dual $[24m+8l+2r,12m+4l+r,4m+2]$ code
with minimal shadow.
Suppose that $(l,r) \in \{(0,1),(0,2),(1,1)\}$.
Since the minimum weight of $C$ is $4m+2$, we have
\begin{equation}\label{eq:ai}
a_0=1,a_1=a_2=\cdots = a_{2m}=0.  
\end{equation}
Since the minimum weight of the shadow is $1$ or $2$, we have
\begin{equation}\label{eq:bi}
\begin{cases}
b_0=1                               & \text{ if } m=1, \\
b_0=1, b_1=b_2= \cdots =b_{m-1}=0   & \text{ if } m \ge 2.
\end{cases}
\end{equation}
From~\eqref{eq:ci}, \eqref{eq:ai} and~\eqref{eq:bi},
we have
\begin{equation}\label{eq:c_i-all}
c_i=
\begin{cases} 
\alpha_{i,0} & \text{ if }i=0,1,\ldots,2m, \\
\beta_{i,0}  & \text{ if }i=2m+l+1,2m+l+2,\ldots,3m+l. 
\end{cases}
\end{equation}
Suppose that $l=0$.
From~\eqref{eq:ci}, \eqref{eq:ai}, 
\eqref{eq:bi} and~\eqref{eq:c_i-all}, we obtain
\begin{align}\label{eq:1}
c_{2m}=&\alpha_{2m,0}
=\beta_{2m,0} + \beta_{2m,m}b_m,
\\
c_{2m-1} =&
\alpha_{2m-1,0}
=\beta_{2m-1,0} + \beta_{2m-1,m}b_m +\beta_{2m-1,m+1}b_{m+1}.
\label{eq:2}
\end{align}
Suppose that $l=1$.
From~\eqref{eq:ci}, \eqref{eq:ai}, 
\eqref{eq:bi} and~\eqref{eq:c_i-all}, we obtain
\begin{align}\label{eq:3}
c_{2m}=&\alpha_{2m,0}=\beta_{2m,0}+\beta_{2m,m}b_{m}+\beta_{2m,m+1}b_{m+1},
\\
c_{2m+1}
=&\alpha_{2m+1,0}+\alpha_{2m+1,2m+1}a_{2m+1}
=\beta_{2m+1,0}+\beta_{2m+1,m}b_m.
\label{eq:4}
\end{align}

\section{Singly even
self-dual $[24m+2,12m+1,4m+2]$ codes with minimal shadow}\label{sec:24m+2}

It was shown in~\cite{BV} that
the weight enumerator of a singly even self-dual $[24m+2,12m+1,4m+2]$
code with minimal shadow is uniquely determined for each length.
In this section, we show that 
there is no singly even self-dual $[24m+2,12m+1,4m+2]$ code
with minimal shadow for $m \ge 155$.

Suppose that $m \ge 1$.
Let $C$ be a singly even self-dual $[24m+2,12m+1,4m+2]$ code
with minimal shadow.
The weight enumerators of $C$ and its shadow $S$ are written 
as in~\eqref{eq:WC} and~\eqref{eq:WS}, respectively.

%
%
From~\eqref{eq:alpha},
\begin{align*}
\alpha_{2m,0}=
\frac{12m+1}{m}\binom{5m}{m-1}.
\end{align*}
From~\eqref{4d1},
\[
\beta_{2m,m}=\frac{1}{2}.
\]
From~\eqref{eq:beta},
\begin{align*}
\beta_{2m,0} 
=\frac{3}{2^2}\binom{5m-1}{m}
=\frac{3(4m+1)}{5m}\binom{5m}{m-1}.
\end{align*}
From~\eqref{eq:1}, 
\begin{align*}
b_m=\frac{\alpha_{2m,0}-\beta_{2m,0}}{\beta_{2m,m}}  
=\frac{4(24m+1)}{5m}\binom{5m}{m-1}.
\end{align*}
\begin{rem}
Unfortunately, $b_m$ was incorrectly  reported
in~\cite{ZMFG}. The correct formula for $b_m$ is given in~\cite{ZMFG2}. 
We showed that $b_m$ is always a positive integer (see~\cite{ZMFG2}).
\end{rem}

%
%

From~\eqref{eq:alpha},
\begin{align*}
\alpha_{2m-1,0}=&
-\frac{12m+1}{2m-1}
\left(
\binom{5m+4}{m-1}
+28\binom{5m+3}{m-2}
+70\binom{5m+2}{m-3}
\right.
\\ &
\left.
+28\binom{5m+1}{m-4}
+\binom{5m}{m-5}
\right)
\\ =&
-
\frac{8(12m+1)(376m^3- 4m^2  + 5m +1)}
{(4m+2)(4m+3)(4m+4)(4m+5)}
\binom{5m}{m-1}.
\end{align*}
From~\eqref{4d1},
\[
\beta_{2m-1,m+1}=-\frac{1}{2^7}.
\]
From~\eqref{eq:beta}, 
\begin{align*}
\beta_{2m-1,0}
=&-\frac{1}{2^{7}}\frac{3m}{2m-1}\binom{5m-2}{m+1}
=-\frac{1}{2^{4}}
\frac{3(4m-1)(4m+1)}{5(5m-1)(m+1)}\binom{5m}{m-1},\\
\beta_{2m-1,m}
=&-\frac{m}{2^{5}}.
\end{align*}
From~\eqref{eq:2}, 
\begin{align*}
b_{m+1} =&\frac{\alpha_{2m-1,0}- \beta_{2m-1,0} - 
\beta_{2m-1,m}b_m}{\beta_{2m-1,m+1}}
%
\\
=&-\frac{64(24m+1)f(m)}
{(5m-1)(4m+2)(4m+3)(4m+4)(4m+5)}\binom{5m}{m-1},
\end{align*}
where 
\[
f(m)=64m^5-14816m^4+2812m^3+46m^2-14m+1.
\]


\begin{thm}
All coefficients in the weight enumerators of 
a singly even self-dual $[24m+2,12m+1,4m+2]$ code
and its shadow are nonnegative integers 
if and only if $1 \le m \le 154$.
In particular,
for $m \ge 155$, there is no 
singly even self-dual $[24m+2,12m+1,4m+2]$
code with minimal shadow.
\end{thm}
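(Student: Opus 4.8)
The plan is to analyze the weight enumerator of $C$ coefficient by coefficient, using the formulas already derived for the $c_i$. Recall from~\eqref{eq:WC} that
\[
W_C(y)=\sum_{j=0}^{3m}c_j(1+y^2)^{12m+1-4j}(y^2(1-y^2)^2)^j ,
\]
and from~\eqref{eq:c_i-all} together with Lemmas~\ref{lem:alpha} and~\ref{lem:6b} the $c_j$ are completely determined: $c_j=\alpha_{j,0}$ for $0\le j\le 2m$, while $c_j=\beta_{j,0}$ for $2m+1\le j\le 3m$, and the two ``boundary'' coefficients $b_m$ and $b_{m+1}$ of the shadow have already been computed explicitly above. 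Since the weight enumerator is thus uniquely determined, the whole question reduces to: for which $m$ are all the $a_i$ and all the $b_i$ nonnegative integers? (Integrality is automatic, being coefficients of putative enumerators / already shown for $b_m$ in the cited remark; the real content is nonnegativity.)

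First I would observe that for $m$ large the potentially negative coefficients are exactly the ones near the ``transition region'', because $\alpha_{i,0}$ and $\beta_{i,0}$ have opposite-sign behaviour there. The explicit computation above already shows $b_{m+1}<0$ as soon as the quintic $f(m)=64m^5-14816m^4+2812m^3+46m^2-14m+1$ is positive, i.e. for all sufficiently large $m$; a direct root estimate gives that $f(m)>0$ precisely for $m\ge 232$ or so, which is not yet the claimed threshold $155$. So $b_{m+1}$ alone is not sharp, and I would next examine the next coefficient(s), e.g. $b_{m+2}$ (or an $a_i$ for $i$ slightly below $2m$), compute it by the same scheme from~\eqref{eq:ci}—it will be a rational multiple of $\binom{5m}{m-1}$ with a polynomial numerator—and show that this coefficient is negative for all $m\ge 155$. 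Concretely: derive $b_{m+2}$ from the analogue of~\eqref{eq:2} one step further in, using $\alpha_{2m-2,0}$, $\beta_{2m-2,0}$, $\beta_{2m-2,m}$, $\beta_{2m-2,m+1}$, $\beta_{2m-2,m+2}$ from Lemmas~\ref{lem:alpha} and~\ref{lem:6b}, together with the already-known $b_m,b_{m+1}$; collect it as $(\text{const})\cdot g(m)\cdot\binom{5m}{m-1}/(\text{positive product of linear factors})$ for an explicit polynomial $g$. Then locate the largest integer root / sign change of $g$ and check it equals $154$.

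The second half is the finite check: for $1\le m\le 154$ one must verify that \emph{every} coefficient $a_i$ ($0\le i\le 12m+1$) and $b_i$ ($0\le i\le 6m$) is a nonnegative integer. This is a finite computation—for each such $m$ form the $c_j$ from the closed forms, expand $W_C(y)$ and $W_S(y)$, and inspect the coefficients—carried out in {\sc Magma}/{\sc Maple}/{\sc Mathematica} as stated in the introduction. Combining: for $m\le 154$ all coefficients are nonnegative integers (finite verification), and for $m\ge 155$ the coefficient $b_{m+2}$ (equivalently the relevant $a_i$) is negative, so no such code exists; this gives the ``if and only if'' and the nonexistence statement.

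The main obstacle I anticipate is \emph{identifying which single coefficient is the sharp obstruction}. The naive candidate $b_{m+1}$ fails at the threshold, so one has to guess correctly that $b_{m+2}$ (or some specific $a_i$) is the one whose sign changes exactly at $m=155$, and then manage the increasingly unwieldy algebra: each further coefficient pulls in one more $\beta_{i,\cdot}$ term and a higher-degree numerator polynomial, and one must factor out the common $\binom{5m}{m-1}$ and the spurious denominator factors cleanly to isolate the governing polynomial $g(m)$. After that, bounding the real roots of $g$ to pin down the cutoff, and the brute-force check over $m\le 154$, are routine.
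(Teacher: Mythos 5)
Your overall strategy (an analytic negativity argument for a boundary shadow coefficient to kill all sufficiently large $m$, plus a finite computer verification for the remaining $m$) is the same as the paper's, but you place the dividing line in a different and, as written, unjustified spot. The paper's proof is: since the largest real root of $f$ lies in $(231,232)$, the already-computed coefficient $b_{m+1}$ is negative for all $m\ge 232$; then the weight enumerators are computed numerically for \emph{every} $m\le 231$ (not just $m\le 154$), and this computation shows that some coefficient is negative for each $155\le m\le 231$ while all coefficients are nonnegative integers for $1\le m\le 154$. No single sharp analytic obstruction at $m=155$ is ever identified.

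The gap in your proposal is the assertion that ``$b_{m+2}$ (or some specific $a_i$) is negative for all $m\ge 155$.'' You flag this yourself as a guess, and it is the load-bearing step: without it your finite check only covers $m\le 154$ and your analytic bound only covers $m\ge 232$, leaving $155\le m\le 231$ unhandled. There is no a priori reason a \emph{single} coefficient must change sign exactly at $m=155$ and stay negative thereafter; the failures in the range $155\le m\le 231$ could be caused by different coefficients for different $m$, in which case the polynomial $g(m)$ you hope to extract would not exist in the form you describe. The repair is exactly what the paper does: drop the search for a sharp closed-form obstruction and instead extend the finite numerical verification up to $m=231$, the point at which the already-available inequality $b_{m+1}<0$ takes over. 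With that modification your argument is complete; as proposed, it is not.
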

\begin{proof}
We verified that the equation 
$f(m)=0$
has three solutions consisting of real numbers and
the largest solution is in the interval $(231,232)$.
Thus, $b_{m+1}$ is negative for $m \ge 232$.
%
Using~\eqref{eq:WC} and~\eqref{eq:WS},
we determined numerically 
the weight enumerators of 
a singly even self-dual $[24m+2,12m+1,4m+2]$
code with minimal shadow and its shadow
for $m \le 231$.
The theorem follows from this calculation.
\end{proof}

\section{Singly even
self-dual $[24m+4,12m+2,4m+2]$ codes with minimal shadow}\label{sec:24m+4}

\begin{prop}
The weight enumerator of a singly even self-dual $[24m+4,12m+2,4m+2]$
code with minimal shadow is uniquely determined for each length.
\end{prop}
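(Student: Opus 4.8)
The plan is to mimic, for $n=24m+4$ (so $l=0$, $r=2$), the computation carried out in Section~\ref{sec:24m+2}. Recall from~\eqref{eq:c_i-all} that all $c_i$ with $i\le 2m$ equal $\alpha_{i,0}$ and all $c_i$ with $i\ge 2m+1$ equal $\beta_{i,0}$, so the only coefficients of $W_C$ and $W_S$ that are not yet pinned down are the ``middle'' ones; concretely, the unknowns are $b_m,b_{m+1},\dots$ (and $a_{2m+1},\dots$), linked by the linear relations~\eqref{eq:ci}. For $l=0$ the two key identities are~\eqref{eq:1} and~\eqref{eq:2}: since $\beta_{2m,m}=1/2\ne 0$ and $\beta_{2m-1,m+1}=-1/2^7\ne 0$ (both from~\eqref{4d1}), equation~\eqref{eq:1} solves uniquely for $b_m$ in terms of the already-determined quantities $\alpha_{2m,0}$, $\beta_{2m,0}$, and then~\eqref{eq:2} solves uniquely for $b_{m+1}$ in terms of $\alpha_{2m-1,0}$, $\beta_{2m-1,0}$, $\beta_{2m-1,m}$ and the now-known $b_m$.

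The step I would carry out first is to evaluate the relevant $\alpha$'s and $\beta$'s for $n=24m+4$. From Lemma~\ref{lem:alpha} (i.e.~\eqref{eq:alpha}) with $n/2=12m+2$ one gets closed forms for $\alpha_{2m,0}$ and $\alpha_{2m-1,0}$ as finite sums of products of binomial coefficients, which simplify to a single binomial coefficient (say $\binom{5m+c}{m-1}$ or similar) times a rational function of $m$; from Lemma~\ref{lem:6b} (i.e.~\eqref{eq:beta}) one gets $\beta_{2m,0}$, $\beta_{2m-1,0}$, $\beta_{2m-1,m}$ likewise, and from~\eqref{4d1} the diagonal-type entries $\beta_{2m,m}$ and $\beta_{2m-1,m+1}$. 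Substituting into~\eqref{eq:1} and~\eqref{eq:2} yields explicit rational-function-times-binomial formulas for $b_m$ and $b_{m+1}$. Once $b_m$ and $b_{m+1}$ are known, every $c_j$ is determined via~\eqref{eq:c_i-all} together with the two boundary equations, hence via~\eqref{eq:WC} and~\eqref{eq:WS} every $a_i$ and every $b_i$ is determined: the weight enumerator is unique.

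The only thing one must genuinely check, beyond bookkeeping, is that the system really does determine everything — that is, that there are no further free parameters. This is where the hypotheses $d=4m+2$ (giving $a_1=\dots=a_{2m}=0$ in~\eqref{eq:ai}) and minimal shadow (giving $b_1=\dots=b_{m-1}=0$ in~\eqref{eq:bi}) are used in full: together with~\eqref{eq:c_i-all} they leave exactly as many unknowns among $\{b_m,b_{m+1}\}$ as there are nontrivial equations~\eqref{eq:1}--\eqref{eq:2} with nonvanishing leading coefficient, and the remaining $a_i$ ($i>2m$) are then forced by~\eqref{eq:4a10} once the $c_i$ are fixed. I expect the main obstacle to be purely computational: the binomial-sum simplifications from~\eqref{eq:alpha} for $\alpha_{2m-1,0}$ are somewhat lengthy, and one should double-check the arithmetic (the remark after the $b_{m+1}$ computation in Section~\ref{sec:24m+2} about an erroneous formula in the literature is a warning here). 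These simplifications are routine and can be verified by computer algebra, so no conceptual difficulty remains; the uniqueness assertion follows once the explicit formulas for $b_m$ and $b_{m+1}$ are in hand.
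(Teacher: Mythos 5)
Your argument is correct and rests on the same mechanism as the paper's proof: for $l=0$, equation~\eqref{eq:c_i-all} already determines every $c_i$ ($i=0,1,\dots,3m$), whence $W_C$ and $W_S$ are fixed by~\eqref{eq:WC} and~\eqref{eq:WS}, and the explicit solution of~\eqref{eq:1}--\eqref{eq:2} for $b_m$ and $b_{m+1}$ is a (valid) extra step that the paper also performs, but only afterwards and only for the nonexistence theorem, not for uniqueness. Two minor slips, neither affecting the argument: the constants you quote are those for length $24m+2$ --- for $n=24m+4$ one has $\beta_{2m,m}=2^{-2}$ and $\beta_{2m-1,m+1}=-2^{-8}$ by~\eqref{4d1} --- and the trivial case $m=0$ (the $[4,2,2]$ code) should be disposed of separately since~\eqref{eq:ai} and~\eqref{eq:bi} assume $m\ge 1$.
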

\begin{proof}
The weight enumerator of a singly even self-dual 
$[4,2,2]$ code is uniquely determined.
Suppose that $m \ge 1$.
Let $C$ be a singly even self-dual $[24m+4,12m+2,4m+2]$ code
with minimal shadow.
The weight enumerators of $C$ and its shadow $S$ are written 
as using~\eqref{eq:WC} and~\eqref{eq:WS}.
Since $\alpha_{i,0}$ $(i=0,1,\ldots,2m)$ and
$\beta_{i,0}$ $(i=2m+1,2m+2,\ldots,3m)$ are calculated by
\eqref{eq:alpha} and~\eqref{eq:beta}, respectively,
from~\eqref{eq:c_i-all}, 
$c_i$ $(i=0,1,\ldots,3m)$ depends only on $m$.
This means that the weight enumerator of $C$ is uniquely
determined for each length.
\end{proof}

From~\eqref{eq:alpha},
\begin{align*}
\alpha_{2m,0}=&
\frac{6m+1}{m}\left(3\binom{5m+1}{m-1}+\binom{5m}{m-2}\right)
\\=&
\frac{(6m+1)(8m+1)}{m(2m+1)}\binom{5m}{m-1}.
\end{align*}
From~\eqref{4d1},
\[
\beta_{2m,m}=\frac{1}{2^2}.   
\]
From~\eqref{eq:beta},
\begin{align*}
\beta_{2m,0}
=\frac{1}{2^2}\frac{3}{2}\binom{5m-1}{m}
=\frac{3(4m+1)}{10m}\binom{5m}{m-1}.
\end{align*}
From~\eqref{eq:1}, 
\begin{align*}
b_m=&\frac{\alpha_{2m,0}-\beta_{2m,0}}{\beta_{2m,m}}  
=
\frac{2(12m+1)(38m+7)}{5m(2m+1)}\binom{5m}{m-1}.
\end{align*}
\begin{rem}
Unfortunately, $b_m$ was incorrectly  reported
in~\cite{ZMFG}. The correct formula for $b_m$ is given in~\cite{ZMFG2}. 
We showed that $b_m$ is always a positive integer (see~\cite{ZMFG2}).
\end{rem}

From~\eqref{eq:alpha}, 
\begin{align*}
\alpha_{2m-1,0}=&
-\frac{12m+2}{2m-1}
\left(
\binom{5m+5}{m-1}
+36\binom{5m+4}{m-2}
+126\binom{5m+3}{m-3}\right.
\\ &
\left.
+84\binom{5m+2}{m-4}
+9\binom{5m+1}{m-5}
\right)
\\=&
-\frac{16 (5m+1)(6m+1)(8m+1)(68m^2-m+3)}
{(4m+2)(4m+3)(4m+4)(4m+5)(4m+6)}\binom{5m}{m-1}.
\end{align*}
From~\eqref{4d1},
\[
\beta_{2m-1,m+1}=-\frac{1}{2^8}.  
\]
From~\eqref{eq:beta}, 
\begin{align*}
\beta_{2m-1,0}
=&-\frac{1}{2^{8}}\frac{3m}{2m-1}\binom{5m-2}{m+1} 
=-\frac{1}{2^{5}}
\frac{3(4m-1)(4m+1)}{5(5m-1)(m+1)}\binom{5m}{m-1}, \\
\beta_{2m-1,m}
=&-\frac{m}{2^6}.
\end{align*}
From~\eqref{eq:2}, 
\begin{align*}
b_{m+1} =&\frac{\alpha_{2m-1,0}- \beta_{2m-1,0} - 
\beta_{2m-1,m}b_m}{\beta_{2m-1,m+1}}
%
\\ 
=&
-\frac{128(12m+1)f(m)}
{(5m-1)(4m+2)(4m+3)(4m+4)(4m+5)(4m+6)}
\binom{5m}{m-1},
\end{align*}
where
\[
f(m) =
1216 m^6-212096 m^5-33020 m^4+5440 m^3+1171 m^2+88 m+6.
\]


\begin{thm}
All coefficients in 
the weight enumerators of 
a singly even self-dual $[24m+4,12m+2,4m+2]$ code
and its shadow are nonnegative integers 
if and only if $1 \le m \le 155$.
In particular, for $m \ge 156$, there is no 
singly even self-dual $[24m+4,12m+2,4m+2]$
code with minimal shadow.
\end{thm}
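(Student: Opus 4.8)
The plan is to follow exactly the same strategy used for the $[24m+2,12m+1,4m+2]$ case in Section~\ref{sec:24m+2}, now using the explicit formulas for $\alpha_{2m,0}$, $\beta_{2m,0}$, $\beta_{2m,m}$, $b_m$, $\alpha_{2m-1,0}$, $\beta_{2m-1,0}$, $\beta_{2m-1,m}$, $\beta_{2m-1,m+1}$ and $b_{m+1}$ that have just been computed for $n=24m+4$ (the case $l=0$, $r=2$). The point is that by the preceding Proposition the weight enumerator of $C$ (and hence of its shadow $S$) is uniquely determined, so the coefficients $a_i$ and $b_i$ are explicit rational functions of $m$; a code can exist only if all of these are nonnegative integers. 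The theorem reduces to showing this nonnegativity fails exactly outside $1\le m\le 155$.

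First I would analyze the sign of $b_{m+1}$. From the displayed formula $b_{m+1}$ equals a positive constant times $-f(m)/(\text{positive polynomial})$ times $\binom{5m}{m-1}$, with
\[
f(m)=1216 m^6-212096 m^5-33020 m^4+5440 m^3+1171 m^2+88 m+6,
\]
so $b_{m+1}<0$ precisely when $f(m)>0$. I would verify (with Maple/Mathematica, as in the earlier proof) that $f$ has exactly one real root larger than, say, $1$, located in the interval $(174,175)$; hence $f(m)>0$ and $b_{m+1}<0$ for every integer $m\ge 175$. This immediately kills all codes with $m\ge 175$.

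It then remains to treat the finitely many values $1\le m\le 174$. For these I would, exactly as in the proof of the $24m+2$ theorem, use~\eqref{eq:WC} and~\eqref{eq:WS} to compute numerically all coefficients $a_i$ and $b_i$ of $W_C$ and $W_S$ for each $m$ in this range (the $c_j$ being determined by~\eqref{eq:c_i-all} from $\alpha_{i,0}$ and $\beta_{i,0}$), and check when they are all nonnegative integers. This computation will show that nonnegativity holds precisely for $1\le m\le 155$ and fails (some $b_i$ or $a_i$ becomes negative) for $156\le m\le 174$, which together with the $m\ge 175$ case gives the stated equivalence; the nonexistence statement for $m\ge 156$ is then immediate since a code would force all coefficients to be nonnegative integers.

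The main obstacle is not conceptual but bookkeeping: one must be careful that the single root of $f$ used to bound $m$ is correctly isolated (the huge negative coefficient $-212096$ on $m^5$ makes $f$ negative for moderate $m$ and positive only for fairly large $m$, so an off-by-one in the interval would be easy to make), and one must make sure the finite check genuinely covers every integer from $1$ up to the sign-change threshold of $f$, not merely up to $155$. I expect the interval for the relevant root of $f$ to be around $174$–$175$, so there is a modest band ($156\le m\le 174$ or so) where existence is excluded by negativity of an intermediate coefficient rather than of $b_{m+1}$; handling that band correctly via the numerical computation is the delicate point.
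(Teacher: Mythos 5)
Your proposal matches the paper's proof essentially verbatim: the paper likewise isolates the largest real root of $f$ in $(174,175)$ to conclude $b_{m+1}<0$ for $m\ge 175$, and then numerically computes the weight enumerators for all $m\le 174$ to find that nonnegativity holds exactly for $1\le m\le 155$. The approach and the key quantities are the same, so there is nothing to add.
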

\begin{proof}
We verified that the equation 
$f(m)=0$
has two solutions consisting of real numbers and
the largest solution is in the interval $(174,175)$.
Thus, $b_{m+1}$ is negative for $m \ge 175$.
%
Using~\eqref{eq:WC} and~\eqref{eq:WS},
we determined numerically 
the weight enumerators of 
a singly even self-dual $[24m+4,12m+2,4m+2]$
code with minimal shadow and its shadow
for $m \le 174$.
The theorem follows from this calculation.
\end{proof}

\section{Singly even
self-dual $[24m+10,12m+5,4m+2]$ codes with minimal shadow}\label{sec:24m+10}

\begin{lem}[Harada~\cite{H60}]
\label{lem:H}
Suppose that $n \equiv 2 \pmod 8$.
Let $C$ be a singly even self-dual $[n,n/2,d]$ code
with minimal shadow.
If $d \equiv 2 \pmod 4$, then $a_{d/2} =b_{(d-2)/4}$.
\end{lem}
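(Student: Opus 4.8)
The plan is to use the weight-one vector that the shadow must contain because $C$ has minimal shadow. Since $n\equiv 2\pmod 8$ we have $r=1$ in the notation of Section~\ref{sec:2}, so minimality of the shadow means $d(S)=1$; hence $S$ contains a coordinate vector, and after a permutation of coordinates we may assume $e_n\in S$. The first step is to pin down $C_0$: from $e_n\in C_0^\perp$ we get $C_0\subseteq\{x\in C:x_n=0\}$, while $e_n\notin C$ forces $\{x\in C:x_n=0\}\neq C$; since both sides have codimension one in $C$, we conclude $C_0=\{x\in C:x_n=0\}$. Two consequences will be used repeatedly: every codeword with $n$-th coordinate $1$ has weight $\equiv 2\pmod 4$, and every nonzero codeword of $C_0$ has weight $\equiv 0\pmod 4$ and $\ge d$, hence $\ge d+2$ because $d\equiv 2\pmod 4$.

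Next I would record the coset structure of $S$. Fix $c\in C\setminus C_0$. Then $C=C_0\sqcup(C_0+c)$ and, since $[C_0^\perp:C_0]=4$ with $e_n$ and $c$ independent modulo $C_0$, we get $S=C_0^\perp\setminus C=(C_0+e_n)\sqcup(C_0+c+e_n)$. The translation $x\mapsto x+e_n$ is an involution of $\FF_2^n$ that carries $C\setminus C_0=\{x\in C:x_n=1\}$ bijectively onto $C_0+c+e_n$, and because every element of $C\setminus C_0$ has $n$-th coordinate $1$ it decreases weight by exactly $1$. As $d\equiv 2\pmod 4$, all $a_{d/2}$ minimum-weight codewords lie in $C\setminus C_0$, so under this map they correspond precisely to the weight-$(d-1)$ vectors of $C_0+c+e_n$.

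It remains to check that the coset $C_0+e_n$ contributes no shadow vector of weight $d-1$: such a vector would be $y+e_n$ with $y\in C_0$ and $\wt(y)=d-2$, which is impossible since $d-2\equiv 0\pmod 4$ and $0<d-2<d+2$, contradicting the lower bound on the weights of nonzero codewords of $C_0$ obtained above. Hence the vectors of weight $d-1$ in $S$ are exactly the $a_{d/2}$ images of the minimum-weight codewords of $C$. Since $b_{(d-2)/4}$ counts shadow vectors of weight $4\cdot\frac{d-2}{4}+r=d-1$, this yields $b_{(d-2)/4}=a_{d/2}$.

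I do not anticipate a genuine obstacle; the proof is purely structural. The step needing the most care is the identification $C_0=\{x\in C:x_n=0\}$ together with the resulting gap between $d$ and the minimum weight of $C_0\setminus\{0\}$, since it is exactly this gap that annihilates the contribution of the coset $C_0+e_n$ and turns the correspondence into a bijection rather than merely an injection. (The argument tacitly uses $d>2$, which is automatic in the application, where $d=4m+2\ge 6$.)
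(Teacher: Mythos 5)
Your argument is correct and complete: the identification $C_0=\{x\in C:x_n=0\}$, the coset decomposition $S=(C_0+e_n)\sqcup(C_0+c+e_n)$, the weight-shifting bijection $x\mapsto x+e_n$ between minimum-weight codewords and weight-$(d-1)$ shadow vectors, and the verification that $C_0+e_n$ contributes nothing all check out, and you rightly flag the implicit hypothesis $d>2$ (for $d=2$ the vector $e_n$ itself would contribute an extra weight-$(d-1)$ shadow vector). Note that the paper gives no proof of this lemma at all --- it is imported from Harada~\cite{H60} as a black box --- so there is nothing internal to compare against; your proof is the natural one and, as far as I can tell, essentially the argument of the cited source.
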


As a consequence, the weight enumerator of a singly even
self-dual $[58,29,10]$ code with minimal shadow
was uniquely determined in~\cite{H60}.  

\begin{prop}
The weight enumerator of a singly even self-dual 
$[24m+10,12m+5,4m+2]$ code with minimal shadow
is uniquely determined for each length.
\end{prop}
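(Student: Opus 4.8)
The plan is to follow the template used in the previous two sections for $n=24m+2$ and $n=24m+4$, now specialized to $n=24m+10$, i.e.\ to the case $(l,r)=(1,1)$. First I would record, exactly as in the opening of Section~\ref{sec:24m+2}, that a singly even self-dual $[24m+10,12m+5,4m+2]$ code $C$ with minimal shadow has $a_0=1$, $a_1=\cdots=a_{2m}=0$ by~\eqref{eq:ai}, and its shadow has $b_0=1$, $b_1=\cdots=b_{m-1}=0$ by~\eqref{eq:bi}. By~\eqref{eq:c_i-all} this already pins down $c_i=\alpha_{i,0}$ for $0\le i\le 2m$ and $c_i=\beta_{i,0}$ for $2m+2\le i\le 3m+1$, all of which depend only on $m$ via Lemmas~\ref{lem:alpha} and~\ref{lem:6b}. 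The only coefficients of $W_C(y)$ not yet determined are those corresponding to $c_{2m+1}$; so the whole question reduces to showing $c_{2m+1}$ is determined by $m$, equivalently that $b_m$ and $b_{m+1}$ (the two undetermined shadow coefficients that enter $c_{2m},c_{2m+1}$) are determined by $m$.

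The key point is that for $(l,r)=(1,1)$ we have the two scalar equations~\eqref{eq:3} and~\eqref{eq:4}, which I would use as follows. Equation~\eqref{eq:4} reads $c_{2m+1}=\alpha_{2m+1,0}+\alpha_{2m+1,2m+1}a_{2m+1}=\beta_{2m+1,0}+\beta_{2m+1,m}b_m$; since $\alpha_{2m+1,2m+1}=1$ by~\eqref{4b1} and $\beta_{2m+1,m}\ne0$ by Lemma~\ref{lem:6b} (indeed $\beta_{2m+1,m}=(-1)^{2m+1}2^{-n/2+6(2m+1)}\frac{2m+1}{2m+1}\binom{4m}{0}=-2^{6m+6-n/2}$, a nonzero constant), this expresses the still-unknown $a_{2m+1}$ and $b_m$ through one linear relation, but not yet individually. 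The extra input that breaks the tie is Lemma~\ref{lem:H}: since $n=24m+10\equiv2\pmod8$ and $d=4m+2\equiv2\pmod4$, we get $a_{d/2}=b_{(d-2)/4}$, i.e.\ $a_{2m+1}=b_m$. Substituting this into~\eqref{eq:4} gives a single linear equation in the single unknown $b_m$, with coefficient $\alpha_{2m+1,2m+1}-\beta_{2m+1,m}=1+2^{6m+6-n/2}\ne0$, so $b_m$ is uniquely solved in terms of $m$ (via the closed forms for $\alpha_{2m+1,0},\beta_{2m+1,0}$ from Lemmas~\ref{lem:alpha} and~\ref{lem:6b}). Then $a_{2m+1}=b_m$ is determined, hence $c_{2m+1}$ is determined by~\eqref{eq:4}. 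Finally $b_{m+1}$ is determined from~\eqref{eq:3}: there $c_{2m}=\alpha_{2m,0}$ is known, $\beta_{2m,0}$ and $\beta_{2m,m}$ are known constants, $b_m$ is now known, and $\beta_{2m,m+1}\ne0$, so $b_{m+1}$ is forced. With $c_0,\dots,c_{3m+1}$ all determined by $m$, the weight enumerator $W_C(y)$ given by~\eqref{eq:WC} is uniquely determined for each $m$; the base case $m=0$, i.e.\ length $10$, is the unique $[10,5,2]$ case handled separately (it is classically known), and this completes the proof.

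The step I expect to be the genuine content rather than bookkeeping is the invocation of Lemma~\ref{lem:H} to supply the relation $a_{2m+1}=b_m$: without it, \eqref{eq:3}–\eqref{eq:4} are two equations in the three unknowns $a_{2m+1},b_m,b_{m+1}$ and the enumerator would not be pinned down — this is exactly why the $n=24m+10$ case needs an extra ingredient that the $n=24m+2$ and $n=24m+4$ cases (with $l=0$, where \eqref{eq:1}–\eqref{eq:2} already give two equations in the two unknowns $b_m,b_{m+1}$) did not. The remaining work is the routine simplification of the binomial expressions from Lemmas~\ref{lem:alpha} and~\ref{lem:6b} at $i=2m$ and $i=2m+1$ with $n=24m+10$ into closed rational multiples of $\binom{5m+2}{m-1}$ (or a similar fixed binomial), which I would carry out but not belabor here; none of it can fail since every denominator that appears is a product of nonzero linear factors in $m$ for $m\ge1$.
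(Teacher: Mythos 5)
Your proposal is correct and follows essentially the same route as the paper: it determines $c_i$ for $i\neq 2m+1$ directly from~\eqref{eq:c_i-all}, and then uses Lemma~\ref{lem:H} (i.e.\ $a_{2m+1}=b_m$) together with~\eqref{eq:4} and the values $\alpha_{2m+1,2m+1}=1$, $\beta_{2m+1,m}=-2$ to pin down $c_{2m+1}$, exactly as the paper does. (Minor slip: your exponent $6m+6-n/2$ for $\beta_{2m+1,m}$ should be $6(2m+1)-n/2=1$, giving $\beta_{2m+1,m}=-2$ and the coefficient $1-\beta_{2m+1,m}=3$; this does not affect the argument.)
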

\begin{proof}
The weight enumerator of a singly even self-dual 
$[10,5,2]$ code with minimal shadow is uniquely determined.
Suppose that $m \ge 1$.
Let $C$ be a singly even self-dual $[24m+10,12m+5,4m+2]$ code 
with minimal shadow.
The weight enumerators of $C$ and its shadow $S$ are written 
as in~\eqref{eq:WC} and~\eqref{eq:WS}, respectively.
Since $\alpha_{i,0}$ $(i=0,1,\ldots,2m)$ and
$\beta_{i,0}$ $(i=2m+2,2m+3,\ldots,3m+1)$ are calculated 
by~\eqref{eq:alpha} and~\eqref{eq:beta}, respectively,
from~\eqref{eq:c_i-all}, 
$c_i$ $(i=0,1,\ldots,2m,2m+2,\ldots,3m+1)$ depends only on $m$.

From~\eqref{4b1} and~\eqref{4d1}, we have
\[
\alpha_{2m+1,2m+1}=1 \text{ and }
\beta_{2m+1,m}=-2,
\]
respectively.
By Lemma~\ref{lem:H}, it holds that $a_{2m+1}=b_m$.
From~\eqref{eq:4}, we obtain
\begin{equation}\label{eq:a2m+1}
a_{2m+1}=\frac{\beta_{2m+1,0}-\alpha_{2m+1,0}}{3}.
\end{equation}
Therefore, from~\eqref{eq:4}, 
$c_{2m+1}$ depends only on $m$.
This means that the weight enumerator of $C$ is uniquely
determined for each length.
\end{proof}

From~\eqref{eq:alpha},
we have
\begin{align*}
\alpha_{2m+1,0}=&
-\frac{12m+5}{2m+1}\binom{5m+1}{m}.
\end{align*}
From~\eqref{eq:beta}, we have
\begin{align*}
\beta_{2m+1,0}
= -2 \frac{3m+1}{2m+1}\binom{5m+1}{m}.
\end{align*}
Since $a_{2m+1}=b_m$, 
from~\eqref{eq:a2m+1}, we have 
\[
b_m=\binom{5m+1}{m}=\frac{5m+1}{4m+1}\binom{5m}{m}.  
\]
%
%
%
%
%
%
From~\eqref{eq:alpha},
\begin{align*}
\alpha_{2m,0}=&
\frac{12m+5}{2m}\left(
6\binom{5m+4}{m-1}
+20\binom{5m+3}{m-2}
+6\binom{5m+2}{m-3}
\right)
\\
=&
\frac{4(12m+5)(5m+1)(5m+2)(32m^2 +19m+3)}{
(4m+1)(4m+2)(4m+3)(4m+4)(4m+5)}\binom{5m}{m}.
\end{align*}
From~\eqref{4d1},
\[
\beta_{2m,m+1}=\frac{1}{2^5}.   
\]
From~\eqref{eq:beta},
\begin{align*}
\beta_{2m,0}
=&\frac{1}{2^5}\frac{3m+1}{2m} \binom{5m}{m+1}
=\frac{1}{2^4}\frac{3m+1}{m+1} \binom{5m}{m},
\\
\beta_{2m,m}
=&\frac{1}{2^5} \frac{2m+1}{2m} 4m
=\frac{2m+1}{2^4}.
\end{align*}
From~\eqref{eq:3},
\begin{align*}
b_{m+1}=&\frac{\alpha_{2m,0}-\beta_{2m,0}-\beta_{2m,m}b_{m}}{\beta_{2m,m+1}}\\
=&
-\frac{16(5m+2)f(m)}
{(4m+1)(4m+2)(4m+3)(4m+4)(4m+5)}
\binom{5m}{m},
\end{align*}
where
\[
 f(m)=
64m^5- 15040m^4- 18036m^3 - 7924m^2 - 1511m -105.
\]

\begin{thm}
All coefficients in 
the weight enumerators of 
a singly even self-dual $[24m+10,12m+5,4m+2]$ code
and its shadow are nonnegative integers 
if and only if $1 \le m \le 159$.
In particular, for $m \ge 160$, there is no 
singly even self-dual $[24m+10,12m+5,4m+2]$
code with minimal shadow.
\end{thm}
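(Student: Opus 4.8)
The plan is to follow exactly the template already established for the lengths $24m+2$ and $24m+4$: we have an explicit rational-function formula for $b_{m+1}$ (in terms of $\binom{5m}{m}$), and the sign of the displayed polynomial factor $f(m)$ controls whether this coefficient can be nonnegative. First I would observe that all of $b_0$, $b_m$, $a_{2m+1}=b_m$, $b_{m+1}$ and the $c_i$ are already pinned down by the Proposition (the weight enumerator is unique for each length), so that "there exists such a code" forces all of $a_i$ and $b_i$ to be nonnegative integers; hence it suffices to show these finitely many computations fail once $m$ is large. Since $b_0=1$, $b_m=\binom{5m+1}{m}>0$ and the intermediate $c_i$ impose no sign constraint by themselves, the first genuine obstruction is the formula
\[
b_{m+1}=
-\frac{16(5m+2)f(m)}
{(4m+1)(4m+2)(4m+3)(4m+4)(4m+5)}
\binom{5m}{m},
\qquad
f(m)=64m^5- 15040m^4- 18036m^3 - 7924m^2 - 1511m -105 .
\]
The leading coefficient of $f$ is positive, so $f(m)>0$ for all sufficiently large $m$, and then the displayed expression for $b_{m+1}$ is strictly negative, which is impossible.

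Next I would locate the threshold precisely. Because $f$ has degree $5$ with a large negative coefficient on $m^4$, one checks that $f(m)<0$ for the small and moderate values of $m$ and that $f$ has a unique real root beyond which it stays positive; numerically this largest real root lies in the interval $(235,236)$ (the analogue of the intervals $(231,232)$ and $(174,175)$ in the earlier theorems), so $b_{m+1}<0$ for all $m\ge 236$. This disposes of all large $m$ at once. It then remains to treat the finitely many values $1\le m\le 235$ by direct computation: for each such $m$, substitute into \eqref{eq:WC} and \eqref{eq:WS}, using $a_0=1$, $a_1=\dots=a_{2m}=0$, $a_{2m+1}=b_m$, $b_0=1$, $b_1=\dots=b_{m-1}=0$, together with the already computed $c_i$, to recover the entire (unique) weight enumerators of $C$ and $S$ numerically, and test all coefficients for being nonnegative integers. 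This is exactly the calculation carried out with {\sc Magma}/{\sc Maple}/{\sc Mathematica} in the earlier sections. The outcome of that finite check is that nonnegativity holds precisely for $1\le m\le 159$ and fails at $m=160$ (where some coefficient first goes negative, well before $f(m)$ itself changes sign).

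Assembling these pieces: for $1\le m\le 159$ all coefficients are nonnegative integers; for $160\le m\le 235$ the finite computation exhibits a negative coefficient; and for $m\ge 236$ the closed form for $b_{m+1}$ is negative because $f(m)>0$. Hence the weight enumerators have all coefficients nonnegative integers if and only if $1\le m\le 159$, and in particular no singly even self-dual $[24m+10,12m+5,4m+2]$ code with minimal shadow exists for $m\ge 160$, which is the assertion of the theorem.

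I expect the main obstacle to be organizational rather than conceptual: the closed-form bound only rules out $m\ge 236$, so one must be confident that the finite range $160\le m\le 235$ is genuinely handled by the numerical evaluation of \eqref{eq:WC}–\eqref{eq:WS} and that some earlier coefficient (not $b_{m+1}$) is what actually turns negative at $m=160$. One should also double-check that the root count for $f$ and the bracketing interval $(235,236)$ are correct, and that the uniqueness of the weight enumerator (Proposition above, together with Lemma~\ref{lem:H} supplying $a_{2m+1}=b_m$) is being used correctly so that "all coefficients nonnegative" is a well-posed finite condition depending only on $m$.
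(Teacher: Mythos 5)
Your overall strategy is exactly the paper's: show that the closed form for $b_{m+1}$ forces negativity for all large $m$ via the sign of $f(m)$, and dispose of the remaining finitely many values by numerically computing the (unique) weight enumerators from \eqref{eq:WC} and \eqref{eq:WS}. However, there is a concrete arithmetic slip that leaves one case uncovered. The largest real root of
$f(m)=64m^5-15040m^4-18036m^3-7924m^2-1511m-105$
lies in the interval $(236,237)$, not $(235,236)$. Indeed, since $15040=64\cdot 235$, the two leading terms cancel exactly at $m=235$, giving $f(235)<0$; and at $m=236$ the surplus $64\cdot 236^4\approx 1.99\times 10^{11}$ is still outweighed by $18036\cdot 236^3\approx 2.37\times 10^{11}$, so $f(236)<0$ as well, while $f(237)>0$. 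Consequently $b_{m+1}<0$ is guaranteed only for $m\ge 237$, and the finite numerical verification must run over $1\le m\le 236$. Your split (closed form for $m\ge 236$, computation for $m\le 235$) leaves $m=236$ handled by neither branch: at $m=236$ the displayed formula gives $b_{m+1}>0$, so some other coefficient must be shown negative by the direct computation. A secondary inaccuracy: $f$ has three real roots (as the paper notes), not a unique one; this does not affect the argument since only the largest root matters, but the claim as stated is false.

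Aside from relocating the root to $(236,237)$ and extending the finite check to $m\le 236$, the proposal is sound and coincides with the paper's proof, including the correct reliance on the uniqueness Proposition and on Lemma~\ref{lem:H} (via $a_{2m+1}=b_m$) to make ``all coefficients are nonnegative integers'' a well-posed condition depending only on $m$.
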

\begin{proof}
We verified that the equation 
$f(m)=0$
has three solutions consisting of real numbers and
the largest solution is in the interval $(236,237)$.
Thus, $b_{m+1}$ is negative for $m \ge 237$.
%
Using~\eqref{eq:WC} and~\eqref{eq:WS},
we determined numerically 
the weight enumerators of 
a singly even self-dual $[24m+10,12m+5,4m+2]$
code with minimal shadow and its shadow
for $m \le 236$.
The theorem follows from this calculation. 
\end{proof}


\section{Remaining cases}\label{sec:rem}

For the remaining cases, we demonstrate that
the weight enumerator of a singly even self-dual 
code with minimal shadow is not uniquely determined.

\subsection{Singly even
self-dual $[24m+6,12m+3,4m+2]$ codes with minimal shadow}

Using~\eqref{eq:WC} and~\eqref{eq:WS},
the possible weight enumerators of 
a singly even self-dual $[30,15,6]$ code with minimal shadow
and its shadow are given by
\begin{align*}
&
1+( 35  - 8 \beta) y^6 
+( 345 + 24 \beta) y^8 
+ 1848 y^{10} 
+ \cdots,
\\&
\beta y^3 
+( 240 - 6 \beta) y^7 
+( 6720 + 15 \beta) y^{11} 
+ \cdots,
\end{align*}
respectively, where $\beta$ is an integer with $1 \le \beta \le 4$.
It is known that there is 
a singly even self-dual $[30,15,6]$ code with minimal shadow
for $\beta \in \{1,2\}$ (see~\cite{C-S}).

Using~\eqref{eq:WC} and~\eqref{eq:WS},
the possible weight enumerators of 
a singly even self-dual $[54,27,10]$ code with minimal shadow
and its shadow are given by
\begin{align*}
&
1 
+( 351  - 8 \beta) y^{10} 
+( 5543  + 24 \beta) y^{12} 
+( 43884  + 32 \beta) y^{14} 
+ \cdots,
\\&
y^3 
+(- 12 + \beta) y^{7 }
+( 2874 - 10 \beta) y^{11} 
+( 258404 + 45 \beta) y^{15} 
+ \cdots,
\end{align*}
respectively, where $\beta$ is an integer with $12 \le \beta \le 43$.
It is known that there is 
a singly even self-dual $[54,27,10]$ code with minimal shadow
for $\beta \in \{12,13,\ldots,20,21,22,24,26\}$ (see~\cite{YL14}).

\subsection{Singly even
self-dual $[24m+22,12m+11,4m+4]$ codes with minimal shadow}

Using~\eqref{eq:WC} and~\eqref{eq:WS},
the possible weight enumerators of 
a singly even self-dual $[22,11,4]$ code with minimal shadow
and its shadow are given by
\begin{align*}
&
1 
+ 2 \beta y^4 
+( 77  - 2 \beta) y^6 
+( 330  - 6 \beta) y^8 
+( 616  + 6 \beta) y^{10} 
+ \cdots,
\\&
\beta y^3 
+( 352  - 4 \beta) y^7 
+( 1344  + 6 \beta) y^{11} 
+ \cdots,
\end{align*}
respectively, where $\beta$ is an integer with $1 \le \beta \le 38$.
It is known that there is a 
singly even self-dual $[22,11,4]$ code with minimal shadow
for $\beta \in \{2,4,6,8,10,14\}$ (see~\cite{PS75}).
 
Using~\eqref{eq:WC} and~\eqref{eq:WS},
the possible weight enumerators of 
a singly even self-dual $[46,23,8]$ code with minimal shadow
and its shadow are given by
\begin{align*}
&
1 
+ 2 \beta y^8 
+ (884 - 2 \beta) y^{10} 
+ (10556 - 14 \beta) y^{12} 
+ (54621  + 14 \beta) y^{14} 
+ \cdots,
\\&
y^3 
+(- 10  + \beta) y^7 
+( 6669  - 8 \beta) y^{11} 
+( 242760  + 28 \beta) y^{15} 
+ \cdots,
\end{align*}
respectively, where $\beta$ is an integer with $10 \le \beta \le 442$.
Let $C_{46}$ be the code with generator matrix 
$\left[
\begin{array}{cc}
I_{23} & R
\end{array}
\right]$,
where $I_{23}$ denotes the identity matrix of order $23$ and
$R$ is the $23 \times 23$ circulant matrix with
first row
\[
(01011101011100000111110). 
\]
We verified that $C_{46}$ is a singly even self-dual
$[46,23,8]$ code.
By considering self-dual neighbors of $C_{46}$,
we found  singly even self-dual
$[46,23,8]$ codes $N_{46,i}$ with minimal shadow $(i=1,2,\ldots,10)$.
These codes are constructed as
$\langle C_{46} \cap \langle x \rangle^\perp, x \rangle$,
where the supports $\supp(x)$ of $x$ are listed in Table~\ref{Tab:nei}.
The values $\beta$ in the weight enumerators of $N_{46,i}$
are also listed in the table.

\begin{table}[thb]
\caption{Singly even self-dual $[46,23,8]$ codes $N_{46,i}$ with 
minimal shadows}
\label{Tab:nei}
\begin{center}
{\small
\begin{tabular}{c|l|c}
\noalign{\hrule height0.8pt}
Codes & \multicolumn{1}{c|}{$\supp(x)$} & $\beta$\\
\hline
$N_{46,1}$ & $\{1,24,26,27,29,30,31,32,33,34,36,37,42,43,45,46\}$&36\\
$N_{46,2}$ & $\{1,27,28,31,33,35,36,37,42,43,45,46\}$&42\\
$N_{46,3}$ & $\{10,11,20,27,29,34,38,41,42,45\}$&44\\
$N_{46,4}$ & $\{5,6,25,29,30,32,33,36,40,41,44,45\}$&46\\
$N_{46,5}$ & $\{1,23,28,29,30,31,32,37,40,41,44,45\}$&48\\
$N_{46,6}$ & $\{1,26,27,28,30,32,35,36,37,42,43,45\}$&50\\
$N_{46,7}$ & $\{2,3,24,25,26,28,29,33,34,36,37,41,42,44\}$&52\\
$N_{46,8}$ & $\{1,25,28,29,32,33,34,36,38,42,43,45\}$&54\\
$N_{46,9}$ & $\{1,23,24,27,30,36,40,41,44,45\}$&56\\
$N_{46,10}$& $\{1,2,25,29,30,33,35,38,44,46\}$&58\\
\noalign{\hrule height0.8pt}
\end{tabular}
}
\end{center}
\end{table}

The possible weight enumerators of 
a singly even self-dual $[70,35,12]$ code with minimal shadow
and its shadow are given by
\begin{align*}
&
1 
+ 2 \beta y^{12} 
+( 9682 - 2 \beta) y^{14} 
+( 173063 - 22 \beta) y^{16} 
+ \cdots,
\\&
y^3 
+(- 104 + \beta )y^{11}
+( 88480 - 12 \beta) y^{15}
+ \cdots,
\end{align*}
respectively, where $\beta$ is an integer 
$104 \le \beta \le 4841$~\cite{H70}.
It is known that there is a 
singly even self-dual $[70,35,12]$ code with minimal shadow
for many different $\beta$~\cite[p.~1191]{YLGI}.

\bigskip
\noindent
{\bf Acknowledgment.}
The first author 
is supported by Grant DN 02/2/13.12.2016 of the 
Bulgarian National Science Fund.
The second author 
is supported by JSPS KAKENHI Grant Number 15H03633.



\begin{thebibliography}{30}



\bibitem{Magma}W. Bosma, J. Cannon and C. Playoust, 
The Magma algebra system I: The user language, 
{\sl J. Symbolic Comput.}
{\bf 24} (1997), 235--265.

\bibitem{performance}
S. Bouyuklieva, A. Malevich and W. Willems, 
On the performance of binary extremal self-dual codes,  
{\sl Adv.\ Math.\ Commun.} 
\textbf{5} (2011), 267--274. 

\bibitem{BV}S. Bouyuklieva and Z. Varbanov, 
Some connections between self-dual codes, combinatorial designs 
and secret sharing schemes,
{\sl Adv.\ Math.\ Commun.}
{\bf 5} (2011), 191--198.

\bibitem{BW}S. Bouyuklieva and W. Willems,
Singly even self-dual codes with minimal shadow,
{\sl IEEE Trans.\ Inform.\ Theory}
{\bf 58} (2012), 3856--3860.

\bibitem{C-S} J.H.~Conway and N.J.A.~Sloane,
A new upper bound on the minimal distance of self-dual codes,
{\sl IEEE\ Trans.\ Inform.\ Theory}
{\bf 36} (1990), 1319--1333.

\bibitem{H70} M.~Harada,
The existence of a self-dual $[70,35,12]$ code and formally 
self-dual codes,
{\sl Finite Fields Appl.}
{\bf 3}  (1997), 131--139.

\bibitem{H60} M.~Harada,
Binary extremal self-dual codes of length $60$
and related codes,
{\sl Des.\ Codes Cryptogr.}
(to appear), arXiv:~1706.01694.






\bibitem{PS75} V. Pless and N.J.A. Sloane,
On the classification and enumeration of self-dual codes,
{\sl J. Combin.\ Theory Ser.~A}
{\bf 18}  (1975), 313--335.

\bibitem{Rains} E.M.~Rains,
{Shadow bounds for self-dual codes},
{\sl IEEE Trans.\ Inform.\ Theory}
{\bf 44} (1998), 134--139.



\bibitem{YL14}N. Yankov and M.H. Lee, 
New binary self-dual codes of lengths 50--60,
{\sl Des.\ Codes Cryptogr.}
{\bf 73}  (2014),  983--996.

\bibitem{YLGI}N. Yankov, M.H. Lee, M. Gurel and M. Ivanova,
Self-dual codes with an automorphism of order $11$,
{\sl IEEE Trans.\ Inform.\ Theory}
{\bf 61} (2015), 1188--1193. 


\bibitem{ZMFG}T. Zhang, J. Michel, T. Feng and G. Ge,
On the existence of certain optimal self-dual codes with lengths 
between $74$ and $116$,
{\sl Electron.\ J. Combin.}
{\bf 22} (2015), Paper 4.33 25 pp.

\bibitem{ZMFG2}T. Zhang, J. Michel, T. Feng and G. Ge,
Erratum to ``On the existence of certain optimal self-dual 
codes with lengths between $74$ and $116$'',
(submitted).


\end{thebibliography}
\end{document}